\newcommand{\noun}[1]{\textsc{#1}}
\providecommand{\algorithmname}{Algorithm}
\numberwithin{equation}{section}
\numberwithin{figure}{section}
\theoremstyle{plain}
\newtheorem{thm}{\protect\theoremname}
\theoremstyle{plain}
\newtheorem{prop}[thm]{\protect\propositionname}
\theoremstyle{remark}
\newtheorem{rem}[thm]{\protect\remarkname}
\theoremstyle{plain}
\newtheorem{cor}[thm]{\protect\corollaryname}
\providecommand{\corollaryname}{Corollary}
\providecommand{\propositionname}{Proposition}
\providecommand{\remarkname}{Remark}
\providecommand{\theoremname}{Theorem}
\begin{document}

\title{Choosing 1 of $N$ with and without lucky numbers}

\author{Matt Brand}

\address{MERL, 201 Broadway, Cambridge MA USA}

\subjclass[2000]{11A15 Elementary number theory: Power residues, reciprocity; 11Y55
Computational number theory: Calculation of integer sequences; 60C05
Combinatorial probability; 60G40 Probability theory: Stopping times
and gambling theory}

\date{2018.04.16 printed \today}
\begin{abstract}
How many fair coin tosses to choose 1 of $n$ options with uniform
probability? Although a probability problem, the solution is essentially
number-theoretic, with special roles for Mersenne numbers, Fermat
numbers, and the haupt exponent. We propose a bit-efficient scheme,
prove optimality, derive the expected number of coin tosses $e[n]$,
characterize its fractal structure, and develop sharp upper and lower
bounds, both discrete and continuous. A minor but noteworthy corollary,
with real-world examples, is that any lottery or simulation with finite
budget of random bits will have a predictable pattern of lucky and
unlucky numbers.
\end{abstract}

\maketitle

\section{Introduction}

How to choose fairly from 1 of $n$ alternatives, using a minimal
number of coin tosses? The problem is of practical interest for lotteries,
Monte Carlo simulations, and splitting a collegial lunch bill.\footnote{Choosing a payer at random has the virtues of being a fair policy
that requires no calculation and no memory of previous bills, payers,
or participants. Furthermore, it incentivizes participation in many
varied lunch cohorts to minimize lifetime deviation from the expected
cost of one's own eating. }

The 3-way case has some notoriety as a way of arbitrating disputes
in sports \cite{Bissinger90} and parenting. Popular schemes including
flipping three coins and choosing the odd man out; waiting for specific
patterns of heads or tails in a stream of coin flips; and a partition-of-unity
scheme that treats the sequence as the binary digits of a fraction.
Most schemes are inefficient; some are not correct. Indeed, the preliminary
steps of our analysis reveal an elementary error in random number
generation that can be demonstrated in many computing environments.

Formal treatments of the problem are rooted in Von Neumann's \cite{vonNeumann51}
procedure for obtaining an unbiased random bit from a coin of unknown
bias, which was subsequently generalized by Dwass \cite{Dwass72},
Bernard and Letac \cite{BernardLetac73} to choose 1 of $n$ outcomes
uniformly. These ``memoryless'' methods wait for specific sequences
of coinflip outcomes, and are thus inefficient. Stout and Warren \cite{StoutWarren84}
analyzed the tree of possible toss sequences to show that $O(\log n)$-toss
schemes exist, but did not achieve optimality. Using a similar strategy,
Kozen \cite{Kozen14} developed an optimal procedure for simulating
a $q$-biased coin with a $p$-biased coin.

This note develops a fair and optimally toss-efficient scheme for
choosing 1 of $n$. The scheme is easy to explain and follow, yet
it has rich mathematical structure: Its analysis revolves around factorizations
of Mersenne numbers $M_{k}\doteq2^{k}-1$ and ordinary Fermat numbers
$F_{k}\doteq2^{k}+1$, probability recurrences, residue systems, and
a fractal curve. The problem is framed in this section; $\mathsection$\ref{sec:InefficientSchemes}
analyzes some inefficient schemes and exposes a widespread flaw in
random number generation; $\mathsection$\ref{sec:EfficientScheme}
introduces a fair scheme, calculates its efficiency, and proves optimality.
The efficiency curve has a fractal structure (fig.~\ref{fig:expectationCurve})
with irregular peaks whose locations ($\mathsection$\ref{sec:Peak-locations}),
values ($\mathsection$\ref{sec:InefficientSchemes}), and upper bounds
($\mathsection$\ref{sec:Peak-values}) suggest a sub-logarithmic
property, which is proven in $\mathsection$\ref{sec:Orderings},
yielding a fair and efficient method for fair random orderings (equivalently,
samplings without replacement). Finally, appendix $\mathsection$\ref{sec:LuckyNumbers}
demonstrates how to find (or suppress) lucky numbers in lotteries
and some popular scientific computing packages. 

First, some basic facts: The lower bound on the number of fair tosses
is trivially $m\doteq\lceil\log_{2}n\rceil$ needed to distinguish
at least $n$ alternatives, and
\begin{prop}
\label{prop:noUpperBound}For $n$ with odd factors, a fair scheme
has no upper bound on the number of flips.
\end{prop}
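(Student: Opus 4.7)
The plan is to argue by contradiction: suppose a fair scheme always terminates within $K$ coin tosses, and derive that $n$ must be a power of $2$.

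First I would formalize what a bounded scheme looks like. Any coin-tossing procedure that always halts after at most $K$ flips can be identified with a function $f\colon\{0,1\}^{K}\to\{1,\dots,n\}$ (pad any sequence that halted early with its remaining bits; the extra bits are ignored). Under fair coin tosses the $2^{K}$ binary strings are equiprobable, each carrying mass $2^{-K}$, so the probability of choosing option $i$ is $|f^{-1}(i)|/2^{K}$, a dyadic rational.

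Next I would invoke fairness. For the scheme to pick uniformly, every preimage must satisfy $|f^{-1}(i)|/2^{K}=1/n$, i.e.\ $n\cdot|f^{-1}(i)|=2^{K}$. Hence $n$ divides $2^{K}$, forcing $n$ to be a power of $2$. Contradiction: by hypothesis $n$ has an odd factor greater than $1$, so no such $K$ exists, and the number of flips is unbounded.

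I do not anticipate a real obstacle here; the only subtlety is in the setup, namely making sure the modeling step (identifying a bounded procedure with a function on $\{0,1\}^K$) captures every legitimate ``fair scheme,'' including adaptive ones that may stop early. This is handled by treating the scheme as a stopping rule on a depth-$K$ binary tree: each leaf corresponds to some internal halting node together with the irrelevant extensions below it, and the probability of any leaf is again a multiple of $2^{-K}$. Once this is in place, the divisibility argument above closes the proof in one line.
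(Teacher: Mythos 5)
Your proof is correct and is essentially the paper's own argument: you pad early-stopping branches so the scheme becomes an assignment of all $2^{K}$ equiprobable toss sequences to the $n$ options, and fairness then forces $n\mid 2^{K}$, which is impossible when $n$ has an odd factor. The extra care you take in formalizing the padding step only makes explicit what the paper states in one line.
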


\begin{proof}
By contradiction: Assume there exists a fair scheme that terminates
in no more than $T(n)<\infty$ tosses. Regardless of any early stopping
conditions, the scheme can be understood to assign all $2^{T(n)}$
outcomes to the $n$ choices. But there is no equal $n$-way partition
of $2^{k}$ objects for $n$ with odd factors, so the scheme cannot
be fair.
\end{proof}
It follows that a fair scheme must assign some number of outcomes
to a ``no decision'' condition that requires another round of tosses.
Consequently the ultimate number $t$ of coin tosses to choose $1$
of $n$ is a random variable, and we are interested in its expectation,
$e[n]\doteq\mathbb{E}_{n}[t]$.

\section{Some inefficient schemes\label{sec:InefficientSchemes}}

The expected number of coin tosses $e[n]$ will usually be determined
recursively. Two popular schemes are briefy considered to motivate
the analysis. W.l.og., we need only consider odd $n$, since $e[2n]=1+e[n]$. 

\subsection{Odd man out}

This scheme has some fame as a tie-breaker in sports and movies. It
affords a particularly simple analysis: There are $m=n$ tosses in
a round and $2n$ decision conditions wherein one toss is distinct
(odd). Thus the probability of continuing another round is $p=1-2n/2^{m}$.
Each round is independent, so the expectation is unchanged in next
round, giving us the recursion $e[n]=m+pe[n]$. Putting it together,
we have $e[n]=m+(2^{m}-2n)2^{-m}e[n]=2^{n-1}$ expected coin flips
for odd-man-out. This is decidedly inefficient; even $e[3]=4$.

\subsection{Partitions of unity}

This scheme is of interest because it is sometimes mistaken for optimal
and its analysis sheds light on a randomness deficiency that is ubiquitous
in deployed software. The unit interval $[0,1)\subset\mathbb{R}$
is divided into $n$ equal partitions $\{[0,\frac{1}{n}),[\frac{1}{n},\frac{2}{n}),\cdots,[\frac{n-1}{n},1)\}$
with $(n-1)$ interior boundaries at $\{\frac{1}{n},\frac{2}{n},\cdots,\frac{n-1}{n}\}$.
Then $m\ge\lceil\log_{2}n\rceil$ coin tosses select one of $2^{m}$
equal intervals $\{[0,\frac{1}{2^{m}}),[\frac{1}{2^{m}},\frac{2}{2^{m}}),\cdots,[\frac{2^{m}-1}{2^{m}},1)\}$.
Of these, $n-1$ are nondecisive because they span the aforementioned
boundaries. Landing in one requires at least one more coin toss, which
selects its decisive or nondecisive half. Therefore the expected number
of tosses needed for a decision is (nonrecursively)
\begin{equation}
e[n]=m+\frac{n-1}{2^{m-1}}(1+\nicefrac{1}{2}(1+\nicefrac{1}{2}(1+\cdots=m+\frac{n-1}{2^{m}}=\lceil\log_{2}n\rceil+\frac{n-1}{2^{\lceil\log_{2}n\rceil-1}}\,.\label{eq:partitionExpectation}
\end{equation}

Although more efficient and elegant\footnote{A nice property of the $n=3$ partition scheme is that boundaries
have binary representation $\nicefrac{1}{3}=.\overline{01}$ and $\nicefrac{2}{3}=.\overline{10}$,
therefore we have a fair decision as soon as two consecutive tosses
have the same outcome. To decode, represent each toss outcome as 0
or 1 and add together the first and last outcomes to get 0 or 1 or
2. } than odd-man-out, at $e[3]=3$ this method is still suboptimal.

\subsection{A sidenote on lucky numbers}

Between eqn.~\ref{eq:partitionExpectation} and prop.~(\ref{prop:noUpperBound}),
we have the curious implication that the two most common computer
implementations random integer function, \texttt{${\tt randomInteger}(n):=\lfloor n\cdot{\tt randomFloat}[0,1)\rfloor$}
and \texttt{${\tt randomInteger}(n):={\tt randomBinary}[0,2^{b})\bmod n$},
are neither efficient nor fair. More generally,
\begin{prop}
For $n\in\mathbb{N}$ with at least one odd factor, any size-$n$
lottery that uses a finite budget of coin tosses to choose a winning
number will have a predictable sequence of lucky numbers that are
up to twice as likely as unlucky numbers.
\end{prop}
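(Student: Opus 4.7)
The plan is to reduce the finite-budget lottery to a deterministic partition of $\{0,1\}^b$ into $n$ preimage classes and exploit the divisibility obstruction behind Proposition~\ref{prop:noUpperBound} at a fixed budget $b$.

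First I would fix the budget at $b$ tosses. Any lottery using at most $b$ tosses can be extended (by reading, then ignoring, any unused bits) to a total map $f\colon\{0,1\}^b\to\{1,\ldots,n\}$ without changing its output distribution. Writing $c_i\doteq|f^{-1}(i)|$, the probability of outcome $i$ is $p_i=c_i/2^b$ and $\sum_i c_i=2^b$. Because $n$ has an odd factor greater than $1$, $n\nmid 2^b$, so the $c_i$ cannot all be equal: at least one outcome is ``lucky'' with $c_i>2^b/n$ and at least one is ``unlucky'' with $c_i<2^b/n$. Predictability is automatic, since $f$ is determined by the scheme's specification and the lucky numbers can be read off by enumerating $\{0,1\}^b$ once the scheme is known.

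To quantify the disparity I would write $2^b=qn+r$ with $0<r<n$. The best-balanced labeling forces $c_i\in\{q,q+1\}$, yielding $r$ lucky outcomes of probability $(q+1)/2^b$ and $n-r$ unlucky ones of probability $q/2^b$, for a lucky-to-unlucky ratio of $(q+1)/q=1+1/q\le 2$. Equality holds when $q=1$, i.e., at the minimal budget $b=\lceil\log_2 n\rceil$ (equivalently $n\le 2^b<2n$); any less-balanced labeling only widens the ratio, so $2$ is the sharp supremum of best-achievable disparities.

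The chief subtlety is semantic: ``up to twice'' should be read as the tightest disparity attainable by a minimum-variance scheme---at most $2$, and exactly $2$ at minimal budget---rather than a claim that every implementation at every $b$ exhibits a 2x ratio. I would flag this interpretation at the outset and then split the argument into an existence step (from $n\nmid 2^b$, yielding lucky and unlucky numbers) and the elementary pigeonhole computation above for the sharp bound.
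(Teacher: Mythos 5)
Your proposal is correct and takes essentially the same route as the paper: split the lottery into $b$ random tosses followed by a deterministic assignment of the $2^{b}$ sequences to the $n$ numbers, then invoke the pigeonhole obstruction $n\nmid 2^{b}$ for the existence of lucky numbers, with the 2:1 ratio arising in the minimal-budget ($q=1$, i.e.\ $n>2^{b-1}$) case and predictability following from determinism. Your explicit $(q+1)/q$ computation and the caveat that the factor-of-two claim presumes a best-balanced assignment are slightly more careful than the paper's terse ``if $n>2^{b-1}$, the ratio must be 2:1 exactly,'' but the underlying argument is the same.
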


\begin{proof}
W.l.o.g.~the lottery mechanism can be split into a random phase where
$b$ coin tosses choose one of $2^{b}$ possible outcomes (sequences)
and a deterministic phase where each possible outcome is surjectively
assigned to one of $n=2^{a}(2k+1)$ numbers for some integers $a\ge0,k\ge1$.
Since $n\nmid2^{b}$ and $n<2^{b}$, by the pigeonhole principle,
some numbers must be assigned at least one more outcome than the others.
If $n>2^{b-1}$, the ratio must be 2:1 exactly. Predictability follows
from the determinism of the assignment. 
\end{proof}
For $n\ll2^{b}$, the bias is negligible; but at lottery scale, lucky
numbers are mathematically plausible. Appendix~\ref{sec:LuckyNumbers}
lists some widely used computing platforms which currently exhibit
this bias, and illustrates how the pattern of lucky numbers can be
predicted from $n$. 

\section{A fair and efficient scheme\label{sec:EfficientScheme}}

For a fair and efficient scheme, we must use every bit of randomness
in both the decision and the no-decision sequences: Starting with
$m=\lceil\log_{2}n\rceil$ tosses, assign $n$ of the $2^{m}$ possible
outcome sequences to the $n$ choices; if any of these occur, we are
done. The remaining $r=2^{m}-n$ sequences will cue various \textquotedbl re-do\textquotedbl{}
scenarios, with collective probability $p=r/2^{m}$. E.g., if $r=1$
we start over with another round of $m$ tosses. If $n$ is Mersenne
number $M_{m}\doteq2^{m}-1$ this is always the case, so the expected
number of coin tosses follows the simple recursive invariant $e[n]=m+pe=m+2^{-m}e[n]$,
which solves to $e[M_{m}]=m2^{m}/(2^{m}-1)$. Thus choosing $1$ of
$3$ this way will cost only $e[3]=e[M_{2}]=\nicefrac{8}{3}$ tosses,
on average.

When $r\ge2$, only $m^{\prime}\doteq\lceil\log_{2}\nicefrac{n}{r}\rceil$
more coin tosses are needed to have $o\doteq r2^{m^{\prime}}>n$ equiprobable
outcomes in the next round. Assign those to $n$ choices and $r'\doteq r2^{m^{\prime}}-n$
re-do scenarios, and continue. Since the remainder $1<r'<n$ determines
the number of coin tosses in the next round, less than $n$ of the
potentially infinite rounds are unique, which means they must cycle,
and therefore there is always be a length $k<n$ recursive equation
for the expected number of coin tosses, of the form 
\begin{equation}
e[n]=m_{1}+r_{1}2^{-m_{1}}(m_{2}+r_{2}2^{-m_{2}}(m_{3}+r_{3}2^{-m_{3}}(\cdots+r_{k}2^{-m_{k}}e[n])\cdots)\label{eq:roundRecursion}
\end{equation}
where $r_{i}=(r_{i-1}2^{m_{i}})\bmod n$ and $m_{i}=\lceil\log_{2}\nicefrac{n}{r_{i-1}}\rceil$,
$r_{0}=1$. In $\mathsection$\ref{sec:algorithms}, algorithm~(\ref{alg:recursiveSolution})
solves this from the inside out using strictly integer arithmetic.

Since the cycle repeats at $r=1,$ the number of tosses in a cycle
of rounds will be any $T>1$ such that $n\mid M_{T}$, i.e., the discrete
logarithm of $1\bmod n\text{ base }2$. The smallest such $T$ is
variously known as the haupt exponent, , Sloane sequence \url{http://oeis.org/A002326},
and the multiplicative order of $2\pmod{n}$. We will write this as
$T=\text{ord}_{n}(2)$.

Given $T$, $e[n]$ can be expressed in terms of the residue class
$c_{i}\doteq2^{i}\bmod n$ as
\begin{prop}
The expected number of tosses to choose 1 of $n$ is
\begin{equation}
e[n]=\frac{2^{T}}{M_{T}}\sum_{i=0}^{T-1}\frac{2^{i}\bmod n}{2^{i}}\text{ for }T=k\cdot\mathrm{ord}_{n}(2)\text{ with any }k\in\mathbb{N}.\label{eq:expectationFromResidues}
\end{equation}
\end{prop}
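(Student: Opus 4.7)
\medskip

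\noindent\textbf{Proof plan.} The plan is to unroll the nested recursion (\ref{eq:roundRecursion}) and then re-index the result by bit position rather than by round. First, I would record the two elementary invariants of the scheme:
(i) the ``leftover count'' satisfies $r_{i} = (r_{i-1}2^{m_{i}})\bmod n$, so an easy induction from $r_{0}=1$ gives $r_{i} = 2^{M_{i}} \bmod n$, where $M_{i}\doteq m_{1}+\cdots+m_{i}$;
(ii) the choice $m_{i}=\lceil\log_{2}(n/r_{i-1})\rceil$ is the smallest integer with $r_{i-1}2^{m_{i}}\ge n$, and in particular $r_{i-1}2^{m_{i}-1}<n$.

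Next I would expand the linear recursion. The probability of reaching round $i$ (i.e., having produced only no-decision outcomes in rounds $1,\dots,i-1$) telescopes to $r_{i-1}/2^{M_{i-1}}$, so
\begin{equation*}
e[n] \;=\; \sum_{i\ge 1} m_{i}\,\frac{r_{i-1}}{2^{M_{i-1}}}.
\end{equation*}
Because $r_{k}=1$ precisely when $\mathrm{ord}_{n}(2)\mid M_{k}$, the sequence of rounds is exactly periodic: fixing any $T$ with $n\mid M_{T}$, say $T=k\cdot\mathrm{ord}_{n}(2)$, and letting $K$ be the number of rounds within one such cycle, the infinite sum factors as a geometric series with ratio $2^{-T}$, yielding
\begin{equation*}
e[n] \;=\; \frac{1}{1-2^{-T}}\sum_{i=1}^{K}m_{i}\,\frac{r_{i-1}}{2^{M_{i-1}}} \;=\; \frac{2^{T}}{M_{T}}\sum_{i=1}^{K}m_{i}\,\frac{r_{i-1}}{2^{M_{i-1}}}.
\end{equation*}

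The last step, which I expect to be the main technical point, is the re-indexing identity
\begin{equation*}
\sum_{i=1}^{K}m_{i}\,\frac{r_{i-1}}{2^{M_{i-1}}} \;=\; \sum_{j=0}^{T-1}\frac{2^{j}\bmod n}{2^{j}}.
\end{equation*}
I would prove this by slicing the right-hand sum into the blocks $j\in[M_{i-1},M_{i})$. On such a block, $2^{j}=2^{j-M_{i-1}}\cdot 2^{M_{i-1}}$ reduces modulo $n$ to $2^{j-M_{i-1}}r_{i-1}$, provided this product stays below $n$; invariant (ii) gives $2^{j-M_{i-1}}r_{i-1}\le 2^{m_{i}-1}r_{i-1}<n$, so no wraparound occurs within a round. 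Hence each term equals $r_{i-1}/2^{M_{i-1}}$, and summing the $m_{i}$ equal terms in the block reproduces $m_{i}r_{i-1}/2^{M_{i-1}}$. Establishing this block-wise residue identity is the heart of the argument; everything else is bookkeeping.

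Finally, invariance under $T\mapsto kT$ is a one-line check: since $2^{j+\mathrm{ord}_{n}(2)}\equiv 2^{j}\pmod{n}$, replacing $T$ by $kT$ multiplies both the sum and $M_{T}$ by the same geometric factor, leaving the quotient $2^{T}/M_{T}\cdot\sum$ unchanged, which justifies the ``any $k\in\mathbb{N}$'' clause in the statement.
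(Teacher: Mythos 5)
Your proposal is correct and follows essentially the same route as the paper: the heart of both arguments is the observation that within a round the residues do not wrap, so the sequence $2^{j}\bmod n$ is exactly the concatenation of blocks $r_{i-1}2^{0},\dots,r_{i-1}2^{m_{i}-1}$, which turns the round-level recursion into a toss-level (residue-indexed) sum. Where the paper solves the resulting toss-level fixpoint from the inside out, you unroll it into the series $\sum_{i}m_{i}r_{i-1}2^{-M_{i-1}}$ and sum the geometric cycle factor $2^{T}/M_{T}$ explicitly --- a presentational difference only, though your version spells out details (the invariant $r_{i}=2^{M_{i}}\bmod n$, the no-wraparound bound $r_{i-1}2^{m_{i}-1}<n$, and the $T=k\cdot\mathrm{ord}_{n}(2)$ invariance) that the paper leaves implicit.
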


\begin{proof}
Let $R$ be the number of rounds in a cycle. The residue sequence
$\{c_{i}\}_{i=0\ldots T-1}$ is exactly the concatenation of subsequences
$\{r_{t}2^{0},r_{t}2^{1},\cdots,r_{t}2^{m_{t+1}-1}\}_{t=0\ldots R-1}$
with remainders $r_{t}$ and toss-counts $m_{t}$ as defined above.
This contains the same information as eqn.~\ref{eq:roundRecursion},
but with rounds expanded into individual tosses. Putting this into
correspondence eqn.~\ref{eq:roundRecursion} yields expanded fixpoint
\begin{equation}
e[n]=1+p_{1}(1+p_{2}(1+p_{3}(1+\cdots+p_{T-1}(1+p_{T}e[n])\cdots)))\,.\label{eq:tossRecursion}
\end{equation}
with $p_{i}\doteq c_{i}/(2c_{i-1})$ being the probability of continuing
after the $i^{\text{th}}$ toss. Solving from the inside out yields
eqn.~\ref{eq:expectationFromResidues}. 
\end{proof}
\begin{rem}
$T$ can be any whole multiple of $\text{ord}_{n}(2)$ because the
fixpoint can be rolled out to any number of whole cycles. Some examples:
By Euler's theorem, the totient function $\phi(n)$ satisfies $n\mid M_{\phi(n)}$
for odd $n$, allowing $T=\phi(n)$. Also, for the Mersenne numbers
$n=M_{m}$ we have $T=m=\lceil\log_{2}n\rceil$. Finally, for prime
$n$, Lagrange's theorem states that the order of the generator ($=2$)
in the cyclic group $(\mathbb{Z}/n\mathbb{Z})^{\times}$ will divide
the order of the group, allowing $T=n-1$.
\end{rem}

The $e[n]$ recursion in eqn.~\ref{eq:tossRecursion} also highlights
the fact that the probability of no decision in $t$ flips is 
\begin{equation}
\text{Pr}(d>t)\doteq\prod_{i=0}^{t}p_{i}=c_{t}/(c_{0}2^{t})=2^{-t}(2^{t}\bmod n)\label{eq:no-decision}
\end{equation}
 and therefore the probability of terminating at the $t^{\text{th}}$
flip is 
\begin{equation}
Pr(t)\doteq\text{Pr}(d>t-1)-\text{Pr}(d>t)=2^{-t}(2c_{t-1}-c_{t})\:.\label{eq:prob-terminating}
\end{equation}
Note that if we fairly partition the space of $t$-flip sequences
into $n$ groups, there must be $2^{t}\bmod n$ leftover sequences
and $\text{Pr}(d>t)$ is exactly their probability mass. This is useful
for proving
\begin{prop}
\label{prop:Optimality}The bit-efficient scheme is optimal.
\end{prop}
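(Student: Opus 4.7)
The plan is to invoke the tail-sum identity $e[n] = \sum_{t=0}^{\infty}\Pr(d>t)$ for the non-negative integer stopping time $d$ and establish the per-term inequality $\Pr(d>t) \ge 2^{-t}(2^{t}\bmod n)$ for every fair scheme. Since eqn.~\ref{eq:no-decision} already exhibits equality at every $t$ for the bit-efficient scheme, termwise summation will then give the desired optimality of its expected toss count.

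To obtain the per-term bound, I would model an arbitrary fair scheme as a (possibly infinite) binary tree whose internal nodes represent ``undecided'' states and whose leaves are each labelled with one of the $n$ choices, so that a coin-toss sequence terminates exactly when it first reaches a leaf. Writing $n_{s}^{(k)}$ for the number of depth-$s$ leaves labelled $k$, fairness forces $\sum_{s\ge 1}n_{s}^{(k)}2^{-s}=1/n$. Now consider the count $a_{t}^{(k)} \doteq \sum_{s=1}^{t} n_{s}^{(k)}\,2^{t-s}$ of length-$t$ prefixes that are already committed to choice $k$ by step $t$: this is a non-negative integer and, being $2^{t}$ times a partial sum of a series with limit $1/n$, is at most $2^{t}/n$. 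Integrality then sharpens this to $a_{t}^{(k)} \le \lfloor 2^{t}/n\rfloor$, and summing over the $n$ choices yields $\sum_{k}a_{t}^{(k)} \le n\lfloor 2^{t}/n\rfloor = 2^{t}-(2^{t}\bmod n)$. The complementary count is exactly the number of length-$t$ sequences still undecided after $t$ tosses, so $\Pr(d>t)\ge 2^{-t}(2^{t}\bmod n)$, as required.

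The main obstacle I anticipate is really just picking the right model for ``fair scheme''; once the tree formulation is in hand, the argument reduces to a pigeonhole on integer leaf counts, with all the residue-class structure already packaged into the $2^{t}\bmod n$ terms. One small bookkeeping point is that schemes that additionally randomise over the label assignment (not just over coin outcomes) are convex combinations of deterministic tree schemes, and the per-depth bound is linear in the $n_{s}^{(k)}$, so the deterministic optimum is also the overall optimum. Another is verifying that the bit-efficient scheme of $\mathsection$\ref{sec:EfficientScheme} indeed saturates the bound for every $t$, not just at the end of each round; this follows because within each round the scheme distributes equiprobable length-$t$ outcomes as evenly as possible across the $n$ choices, making $A_t = n\lfloor 2^{t}/n\rfloor$ at every $t$.
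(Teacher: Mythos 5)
Your proposal is correct and follows essentially the same route as the paper: both rest on the tail-sum identity $e[n]=\sum_{t\ge0}\Pr(d>t)$ together with the observation that no fair scheme can push $\Pr(d>t)$ below $2^{-t}(2^{t}\bmod n)$, the bound the bit-efficient scheme attains by eqn.~\ref{eq:no-decision}. Your explicit leaf-counting/integrality argument simply makes rigorous the step the paper states only verbally (that reducing any tail probability would require assigning leftover sequences and hence break fairness), which is a welcome but not structurally different elaboration.
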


\begin{proof}
By contradiction. Write the expectation as 
\[
e[n]=\sum_{t=1}^{\infty}t\,\text{Pr}(t)=\sum_{t=1}^{\infty}t(\text{Pr}(d>t-1)-\text{Pr}(d>t))=\sum_{t=0}^{\infty}\text{Pr}(d>t)\,,
\]
which is eqn.~\ref{eq:expectationFromResidues} with $T\to\infty$.
Suppose there is an alternate fair procedure with a lower expectation.
Then for some $t$, it must provide a smaller $\text{Pr}(d>t)$. However,
this cannot be achieved without assigning one or more leftover sequences
to a decision, hence the alternate procedure is unfair. 
\end{proof}
\begin{rem}
A visual proof can be had by growing a binary tree of all possible
coin toss sequences. At each level where $B\ge N$ branches appear,
we prune (assign) all but $B\bmod N$. If there were a more efficient
procedure, it would have to prune (assign) one of those branches earlier,
but that would make the assignee twice as likely as any of the other
decisions. Since this holds for any branching factor,
\end{rem}

\begin{cor}
The bit-efficient scheme generalizes to give an efficient procedure
for choosing 1 of $N$ with any $K>2$ sided fair die.
\end{cor}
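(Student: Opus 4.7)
The plan is to rerun the analysis of $\mathsection$\ref{sec:EfficientScheme} with the binary branching factor $2$ replaced by $K$ throughout, since nothing in that analysis is intrinsically binary.

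First I would redefine the round-by-round construction: starting with $m_{1}=\lceil\log_{K}N\rceil$ rolls, assign $N$ of the $K^{m_{1}}$ equiprobable outcome sequences to decisions and the remaining $r_{1}=K^{m_{1}}-N$ to re-do scenarios; in a generic round with incoming remainder $r_{i-1}$, throw $m_{i}=\lceil\log_{K}(N/r_{i-1})\rceil$ further rolls to obtain $r_{i-1}K^{m_{i}}$ equiprobable continuation sequences, assign $N$ of them to decisions, and set $r_{i}=(r_{i-1}K^{m_{i}})\bmod N$. This yields a recursion of the same shape as eqn.~\ref{eq:roundRecursion} with every $2^{m_{i}}$ replaced by $K^{m_{i}}$.

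Next I would track the roll-by-roll residues $c_{t}\doteq K^{t}\bmod N$, the direct analogue of the binary residues behind eqn.~\ref{eq:no-decision}. When $\gcd(K,N)=1$ the sequence $\{c_{t}\}$ is purely periodic with period $T=\mathrm{ord}_{N}(K)$, and solving the fixpoint from the inside out yields
\begin{equation*}
e_{K}[N]=\frac{K^{T}}{K^{T}-1}\sum_{i=0}^{T-1}\frac{K^{i}\bmod N}{K^{i}}\,.
\end{equation*}
Optimality then follows by the pigeonhole/visual argument of prop.~\ref{prop:Optimality} and the remark preceding this corollary: after $t$ rolls there are $K^{t}$ equiprobable sequences, any fair scheme must partition them into $N$ equal-mass classes, and so at least $K^{t}\bmod N$ of them must remain unassigned; hence $\mathrm{Pr}(d>t)\ge K^{-t}(K^{t}\bmod N)$ for every $t$. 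Our generalized scheme meets this lower bound at every $t$, and since $e_{K}[N]=\sum_{t\ge0}\mathrm{Pr}(d>t)$ no fair scheme can do better.

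The main obstacle I anticipate is the case $\gcd(K,N)>1$, where the residue sequence is only \emph{eventually} periodic. I would factor $N=N'N''$ with $\gcd(N',K)=1$ and every prime of $N''$ dividing $K$, observe that once $t$ exceeds the largest prime-power exponent in $N''$ we have $N''\mid K^{t}$, and reduce the subsequent cycle to $\mathrm{ord}_{N'}(K)$ on the $N'$ factor via the Chinese remainder theorem. The expectation formula above still holds with $T$ taken large enough to absorb the transient. This is bookkeeping rather than a genuine difficulty, but it deserves care because the paper's binary analysis sidesteps it by reducing to odd $N$ (for which $\gcd(2,N)=1$ is automatic).
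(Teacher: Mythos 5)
There is a genuine gap, and it sits exactly at the point where the binary analysis does \emph{not} carry over verbatim. You write: ``assign $N$ of the $K^{m_{1}}$ equiprobable outcome sequences to decisions and the remaining $r_{1}=K^{m_{1}}-N$ to re-do scenarios.'' In the binary scheme this is harmless because $2^{m-1}<n\le 2^{m}$ forces $2^{m}<2n$, so ``assign $n$'' coincides with ``assign all but $2^{m}\bmod n$.'' For a $K$-sided die with $K>2$ the round's outcome count lies in $[N,KN)$, so it can exceed $2N$, and assigning only one full set of $N$ is fair but far from efficient. Concretely, with $K=10$ and $N=3$ your rule leaves $r_{1}=7$ of the $10$ outcomes undecided, giving $e=1+\tfrac{7}{10}e$, i.e.\ $e=\tfrac{10}{3}$ rolls, whereas the optimal rule (assign three outcomes to each choice, leave $10\bmod 3=1$) gives $e=\tfrac{10}{9}$. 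Your proposal is also internally inconsistent: the stated construction yields leftover $r_{i-1}K^{m_{i}}-N$, yet your recursion sets $r_{i}=(r_{i-1}K^{m_{i}})\bmod N$, and your optimality step asserts that the scheme attains $\mathrm{Pr}(d>t)=K^{-t}(K^{t}\bmod N)$, which the construction as described does not (in the example above it gives $0.7$ at $t=1$ versus the bound $0.1$).

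The fix is the pruning rule the paper's remark already states and relies on for this corollary: whenever $B\ge N$ live sequences are available, assign all but $B\bmod N$ of them, distributing $\lfloor B/N\rfloor$ to each choice. With that rule your residue bookkeeping $c_{t}=K^{t}\bmod N$, the expectation formula, and the pigeonhole lower bound $\mathrm{Pr}(d>t)\ge K^{-t}(K^{t}\bmod N)$ (which is correct, since a fair scheme can decide at most $\lfloor K^{t}/N\rfloor$ length-$t$ sequences per choice) all line up, and your argument becomes a legitimate, more quantitative proof than the paper's, which simply observes that the tree-pruning picture is branching-factor agnostic. One further small repair: when $\gcd(K,N)>1$ the residues eventually vanish rather than cycle, so the closed form with prefactor $K^{T}/(K^{T}-1)$ should be replaced by the plain (effectively finite) sum $e_{K}[N]=\sum_{t\ge0}K^{-t}(K^{t}\bmod N)$ rather than ``$T$ large enough to absorb the transient.''
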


\section{Peak locations\label{sec:Peak-locations}}

\begin{figure}
\noindent \centering{}\includegraphics[width=1\textwidth]{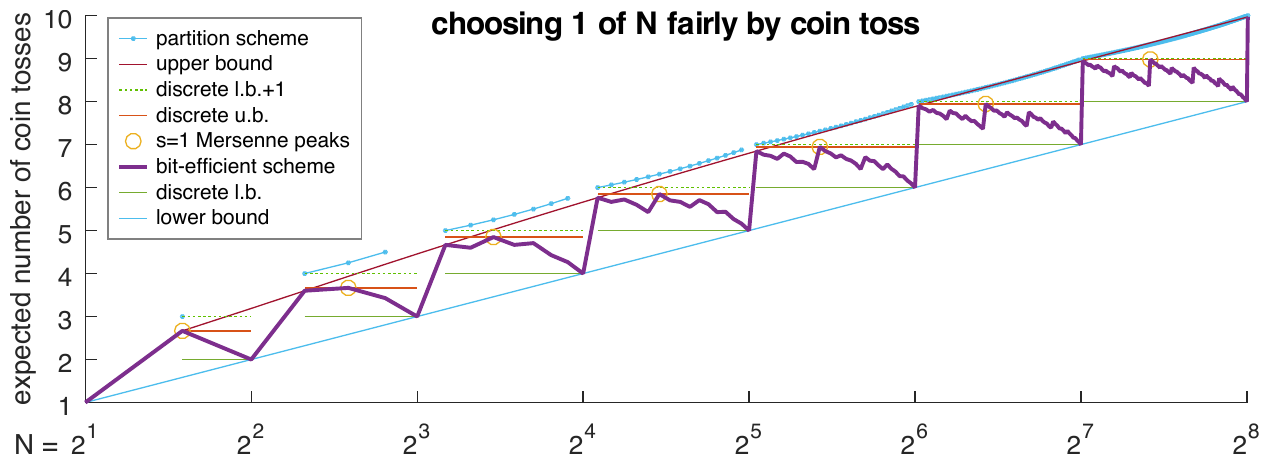}\caption{\label{fig:expectationCurve}$e[n]$ and bounding curves.}
\end{figure}
Viewed as a curve (see fig.~\ref{fig:expectationCurve}), the $e[n]$
sequence exhibits a fractal sawtooth structure with $m$ peaks in
the $m^{\text{th}}$ epoch ($2^{m-1}<n\le2^{m}$) that recur with
additional detail in subsequent epochs. Since eqn.~(\ref{eq:expectationFromResidues})
is an exponentially weighted sum of residues, the $e[n]$ curve spikes
whenever an increment in $n$ causes an early element of the residue
sequence $c_{i}=2^{i}\bmod n$ to jump in value. I.e., at $n=11,$
residue 5 jumps from $2\equiv2^{5}\pmod{10}$ to $10\equiv2^{5}\pmod{11}$.
The first $m$ residues are constant within each epoch; the remaining
residues determine the jumps.

In this section we determine the locations of these jumps; in the
next we determine the values of the associated peaks. Before giving
the general formula, we give special attention to the first two series
of peaks, which will later yield upper bounds on the whole sequence.
The first series occurs at the beginning of the epoch:
\begin{prop}[Fermat peaks]
Residue term $c_{k}$ makes a maximal jump at Fermat number $n=F_{k}$,
producing a peak.
\end{prop}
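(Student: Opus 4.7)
My plan: the statement contains two claims, that $c_k$ attains a maximally large value at $n=F_k$ and that this value forces a peak in $e[n]$, which I would handle in sequence.

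For the maximal-jump claim, direct calculation suffices. Since $F_k = 2^k+1 > 2^k$, one has $c_k = 2^k \bmod F_k = 2^k = F_k - 1$, the largest possible residue modulo $F_k$. Comparing with the previous odd $n = 2^k - 1 = M_k$, where $c_k = 2^k \bmod M_k = 1$, the jump is $2^k - 1$, the maximum conceivable magnitude. Equivalently, the summand $c_k/2^k = 1$ saturates the trivial term-wise bound $c_i/2^i \le 1$ implicit in (\ref{eq:expectationFromResidues}).

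For the peak claim, I would exploit the fact that $F_k$ is the first integer in the epoch $m = k+1$, so for every $n$ with $2^k < n \le 2^{k+1}$ the initial residues $c_0, \ldots, c_k$ are identically $1, 2, \ldots, 2^k$; differences in $e[n]$ across the epoch are thus controlled by the tail $c_{k+1}, c_{k+2}, \ldots$. At $n = F_k$ the congruence $2^k \equiv -1 \pmod{F_k}$ pins down the short cycle length $T = \mathrm{ord}_{F_k}(2) = 2k$ and the explicit residues $c_{k+j} = F_k - 2^j$ for $j = 1, \ldots, k-1$, each as close to $n$ as possible. Substituting into (\ref{eq:expectationFromResidues}) gives a closed form for $e[F_k]$, and I would then use (\ref{eq:roundRecursion}) to observe that the first-round leftover $r_1 = 2^{k+1} - n$ is maximized over the epoch exactly at $n = F_k$ (where $r_1 = 2^k - 1$), so that moving $n$ to any other odd value in the epoch shrinks $r_1$ and, by the recursion, reduces $e[n]$.

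The main obstacle I anticipate is this final comparison, namely propagating the monotonicity ``larger first-round leftover implies larger expectation'' through all subsequent rounds to conclude a strict local maximum. Since each round's leftover $r_t$ is itself recursively controlled by $r_{t-1}$, I expect a routine induction on the recursion (or a direct term-by-term comparison of the closed forms across odd $n$ in the epoch) to suffice, but this bookkeeping is the delicate step.
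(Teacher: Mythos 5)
Your first paragraph is essentially the paper's proof and is correct in substance: at $n=F_{k}$ the residue $c_{k}=2^{k}=n-1$ is as large as a residue can be, so the early term $c_{k}/2^{k}$ in eqn.~\ref{eq:expectationFromResidues} saturates. One cosmetic difference: the paper's notion of ``jump'' is the change of $c_{k}$ under the increment $n-1\to n$, so it compares against $n-1=2^{k}$ (where $c_{k}=0$ and the sequence terminates), giving a jump of $n-1$; you compare against the previous odd number $M_{k}$ (where $c_{k}=1$), giving $2^{k}-1$. Either way the maximal-jump claim stands, and this half matches the intended argument.

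The second half, however, rests on a false step and aims at a statement stronger than the proposition (and false). ``Producing a peak'' here means a local spike of the sawtooth curve caused by a maximal early-residue jump; the paper's proof stops at the jump computation and makes no epoch-wide comparison. Your proposed completion --- that the first-round leftover $r_{1}=2^{k+1}-n$ is maximized at $n=F_{k}$ and that a larger $r_{1}$ propagates through the recursion to a larger $e[n]$ --- does not hold. Counterexample: $n=9=F_{3}$ has $r_{1}=7$ and $e[9]=2+\tfrac{8}{9}\cdot3\approx4.67$, while $n=11$ has only $r_{1}=5$ yet $e[11]=5\cdot2^{5}/F_{5}=\tfrac{160}{33}\approx4.85>e[9]$; similarly $e[5]=3.6<e[6]=1+e[3]\approx3.67$. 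Indeed the paper's later Mersenne-bound proposition shows that the epoch maximum sits at the first Mersenne peak $F_{j}/3$, not at the Fermat number, so the ``routine induction'' you anticipate cannot be completed --- the delicate bookkeeping you flag is precisely where the claim breaks. To stay within what the proposition asserts, keep your first paragraph; if you want a quantitative handle on the spike itself, your explicit length-$2k$ residue cycle gives $e[F_{k}]=2+k2^{k}/F_{k}$, which exceeds $e[2^{k}]=k$ by $2-k/F_{k}>0$, exhibiting the upward jump at $n=F_{k}$ without any epoch-wide maximality claim.
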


\begin{proof}
With $n=F_{k},$ the residue sequence $2^{i}\bmod n$ is $\{1,2,4,\cdots,2^{k-1},2^{k}=n-1,n-2,n-4,\cdots,n-2^{k-1}\}$
repeating, while the residue sequence for $2^{i}\bmod(n-1)$ is $\{1,2,4,\cdots,2^{k-1},0,0,0\cdots\}$
nonrepeating. Therefore the jump is $2^{k}-0=n-1$, the largest possible.
\end{proof}
The second series is associated with the second varying term in the
residue sequence. Its evolution is most conveniently tracked every
other epoch:
\begin{prop}[First Mersenne peaks]
In epoch $m=2k$, residue term $c_{2k+1}$ makes the second largest
jump at $n=F_{2k+1}/3$.
\end{prop}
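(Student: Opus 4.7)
The plan is purely computational. First, I check that $n \doteq F_{2k+1}/3 = (2^{2k+1}+1)/3$ is a well-defined positive integer: since $2 \equiv -1 \pmod 3$ and $2k+1$ is odd, $2^{2k+1} \equiv -1 \pmod 3$, so $3 \mid F_{2k+1}$. Writing $n = (2\cdot 2^{2k}+1)/3$ then gives $2^{2k-1} < n < 2^{2k}$ for every $k \ge 1$, placing $n$ in epoch $2k$.

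Next I compute $c_{2k+1}$ at $n$ and at $n-1$ from two defining identities. From $3n = 2^{2k+1}+1$ I read $2^{2k+1}\equiv -1 \pmod{n}$, so $c_{2k+1}(n) = n - 1$, the maximal possible residue. Rewriting as $3(n-1) = 2^{2k+1} - 2$ with $n - 1 > 2$ (valid for $k \ge 2$; the case $k = 1$ is a degenerate overlap with the Fermat peak at $F_1 = 3$) yields $c_{2k+1}(n-1) = 2$. The jump in $c_{2k+1}$ at $n = F_{2k+1}/3$ is therefore $n - 3 \approx 2^{2k+1}/3$, contributing roughly $1/3$ to $e[n]$.

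The main obstacle is justifying the \emph{second}-largest ranking within epoch $2k$. I would parametrize jumps by the integer quotient $q(n) \doteq \lfloor 2^{2k+1}/n \rfloor$: a short calculation shows that whenever $q$ drops by $1$ as $n$ increments, $c_{2k+1}$ jumps by $n - q - 1$, while between such drops it changes by only $-q$ per unit step. Within epoch $2k$ we have $q \in \{2,3\}$, so the unique in-epoch transition $q \colon 3 \to 2$ is forced to occur precisely at $n = \lceil 2^{2k+1}/3\rceil = F_{2k+1}/3$. This is therefore the sole large in-epoch jump of $c_{2k+1}$; since jumps in higher-indexed residues $c_i$ with $i \ge 2k+2$ contribute at most $n/2^i \le 2^{-2}$ to $e[n]$, the resulting peak sits second only to the opening Fermat peak at $n = F_{2k-1}$ guaranteed by the preceding proposition.
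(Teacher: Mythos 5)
Your identification of the location and the size of the jump is correct, and your route there is actually cleaner than the paper's: the single identities $3n=2^{2k+1}+1$ and $3(n-1)=2^{2k+1}-2$ give $c_{2k+1}(n)=n-1$ and $c_{2k+1}(n-1)=2$ immediately, which is exactly what the paper extracts through its chain of auxiliary facts A--E, and your quotient-drop bookkeeping ($q=\lfloor 2^{2k+1}/n\rfloor$ can only be $3$ or $2$ inside epoch $2k$, and the drop $3\to 2$ happens precisely at $\lceil 2^{2k+1}/3\rceil=F_{2k+1}/3$) is a nice addition that also shows this is the \emph{only} upward jump of $c_{2k+1}$ inside the epoch, something the paper never states explicitly. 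Flagging $k=1$ ($n=3=F_1$) as a degenerate overlap is also right.

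The gap is in how you justify ``second largest.'' The paper means the jump magnitude itself is the second largest achievable: a jump of the maximal size $n-1$ requires the outgoing residue to be $0$, which forces the previous modulus $n-1$ to be a power of two, i.e.\ a Fermat location; at every other location $n-1$ is even with an odd factor, so each residue after position $0$ is a nonzero even number, hence at least $2$, and any non-Fermat jump is at most $(n-1)-2=n-3$, which is exactly what is attained here. You instead rank the resulting peaks of $e[n]$, bounding each higher-index contribution by $n/2^{i}\le 2^{-2}$ for $i\ge 2k+2$ and comparing with the roughly $1/3$ contributed by $c_{2k+1}$. That per-term bound does not close the argument: several residues with $i\ge 2k+2$ can jump at the same $n$ (if $c_{2k+2}$ lands at $n-a$ for small $a$, then $c_{2k+3}=2c_{2k+2}\bmod n=n-2a$ lands high as well), and the crude bounds sum geometrically to about $n/2^{2k+1}\approx 1/2>1/3$, so simultaneous jumps are not excluded; moreover at $k=2$ your Mersenne contribution $(n-3)/2^{2k+1}$ equals exactly $1/4$, so even the term-by-term comparison is not strict there. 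Your quotient analysis, for its part, only compares jumps of $c_{2k+1}$ with one another, not with jumps of other residue terms. The parity/extremal-residue observation above is the missing ingredient that makes ``second largest'' precise without any appeal to $e[n]$.
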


\begin{proof}
We show that $2\equiv2^{2k+1}\pmod{n-1}$ and $n-1\equiv2^{2k+1}\pmod{n}$.
Useful facts are A: $3\mid M_{2k},\,k>0$ because $M_{2k}=M_{k}F_{k}=M_{k}(M_{k}+2)$;
B: $1\equiv a\pmod{b}$ and $c|b$ $\implies$$1\equiv a\pmod{b/c}$;
C: $1\equiv a\pmod{b}$$\implies$$2\equiv2a\pmod{2b}$; D: $n-1=(F_{2k+1}-3)/3=(2M_{2k}-2)/3=\nicefrac{2}{3}M_{2k}$;
E: $3\mid F_{2k+1},k\ge0$ because $3\mid M_{2k}$ and $F_{2k+1}=2M_{2k}+3$.
First, $1\equiv(M_{2k}+1)\pmod{M_{2k}}$ $\stackrel{A,B}{\implies}$
$1\equiv2^{2k}\pmod{M_{2k}/3}$ $\stackrel{C}{\implies}$ $2\equiv2\cdot2^{2k}\pmod{2M_{2k}/3}\stackrel{D}{=}2^{2k+1}\pmod{n-1}$.
Second, $n-1\equiv3n-1\pmod{n}$ $\implies$ $n-1\equiv2^{2k+1}\pmod{n}$
because $3n-1=3(F_{2k+1}/3)-1\stackrel{\text{E}}{=}2^{2k+1}$. Finally,
note that the jump $n-3$ is second largest because $n-1$ is the
maximal residue value and $2$ is the minimal value that can occur
in a (repeating) residue class after the first position.
\end{proof}
The remaining series of peaks are associated with smaller jumps:
\begin{prop}[All Mersenne peaks]
\label{prop:MersennePeaks}Residue $c_{m+s}$ jumps $n-F_{s}$ to
peak value $n-1$ in epoch $m=2sk$ at
\[
n=N_{s}[k]\doteq F_{s(2k+1)}/F_{s}
\]
\end{prop}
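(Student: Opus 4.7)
The plan is to compute the residue $c_{m+s}$ at both $n=N_s[k]$ and its predecessor $n-1$ directly, and to verify that the epoch comes out right. Three of the four steps are routine. First, the odd-exponent factorization $x^{2k+1}+1 = (x+1)\sum_{j=0}^{2k}(-x)^{j}$ with $x=2^{s}$ shows that
\[
N_{s}[k] \;=\; \sum_{j=0}^{2k}(-2^{s})^{j} \;=\; 2^{2sk} - 2^{(2k-1)s} + 2^{(2k-2)s} - \cdots - 2^{s} + 1,
\]
so $N_s[k]$ is a positive integer and $F_s\mid F_{s(2k+1)}$. Second, dropping all but the two leading terms in this alternating sum yields $2^{2sk-1}\le 2^{(2k-1)s}(2^{s}-1) < N_{s}[k] < 2^{2sk}$ for $s\ge 1$, placing $n$ in epoch $m=2sk$. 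Third, since $m+s = s(2k+1)$ and $n\mid F_{s(2k+1)}$, we have $2^{s(2k+1)}\equiv -1\pmod{n}$, so $c_{m+s}=n-1$ as claimed.

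The real work is computing $c_{m+s}$ at $n-1$ to establish the jump magnitude. The key is the factorization
\[
n-1 \;=\; \frac{F_{s(2k+1)}-F_{s}}{F_{s}} \;=\; \frac{2^{s}(2^{2sk}-1)}{F_{s}} \;=\; 2^{s}\cdot\frac{M_{2sk}}{F_{s}},
\]
which first requires checking $F_{s}\mid M_{2sk}$: this follows from $2^{2s}\equiv 1\pmod{F_{s}}$, so $\mathrm{ord}_{F_{s}}(2)\mid 2s\mid 2sk$. Writing $q\doteq M_{2sk}/F_{s}$, we have $2^{2sk}=1+F_{s}q$, and therefore
\[
2^{s(2k+1)} \;=\; 2^{s}\cdot 2^{2sk} \;=\; 2^{s}+F_{s}\cdot 2^{s}q \;=\; 2^{s}+F_{s}(n-1) \;\equiv\; 2^{s}\pmod{n-1}.
\]
The jump at $n$ is then $(n-1)-2^{s} = n - F_{s}$, exactly as advertised.

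The main obstacle is this fourth step, because modular arithmetic mod the even number $n-1$ does not directly admit the order-of-$2$ tools used elsewhere in this section; the resolution is to peel off the $2^{s}$ factor and reduce to the odd modulus $M_{2sk}/F_{s}$, where $2^{2sk}\equiv 1$ by construction. As a sanity check, specializing to $s=1$ reproduces the First Mersenne peaks proposition (predecessor residue $F_{1}-1=2$, jump $n-F_{1}=n-3$), confirming that this is a genuine generalization of the argument already given.
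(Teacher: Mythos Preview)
Your argument is correct and follows essentially the same route as the paper: both verify $F_{s}\mid F_{s(2k+1)}$ via the odd-exponent factorization, compute $c_{m+s}(n)=n-1$ directly from $n\mid F_{s(2k+1)}$, and obtain $c_{m+s}(n-1)=2^{s}$ from the factorization $n-1=2^{s}M_{2sk}/F_{s}$. Your presentation is somewhat cleaner---you write $2^{s(2k+1)}=2^{s}+F_{s}(n-1)$ in one line rather than chaining several modular implications---and you additionally verify that $N_{s}[k]$ lies in epoch $2sk$, which the paper leaves implicit.
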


\begin{proof}
We claim that at $n=F_{s(2k+1)}/F_{s}\in\mathbb{N}$, $2^{s}\equiv2^{s(2k+1)}\pmod{n-1}$,
and $n-1\equiv2^{s(2k+1)}\pmod{n}$, yielding a jump of $n-1-2^{s}=n-F_{s}$.
Useful facts are A: $F_{s}\mid F_{s(2k+1)}$ because $(a-b)\mid(a^{q}-b^{q})$,
choose $a=2^{s},b=-1,q=2k+1$; B $M_{2sk}=(2^{2sk}-1)=(2^{sk}-1)(2^{sk}+1)=M_{sk}F_{sk}$;
C: $1\equiv a\pmod{b}$$\implies$$2^{s}\equiv2^{s}a\pmod{2^{s}b}$;
D: $n-1=(F_{s(2k+1)}-F_{s})/F_{s}=2^{s}(M_{ks}F_{ks})$. Fact A settles
claim 1, that $n\in\mathbb{N}$. For claim 2, $1\equiv(M_{2sk}+1)\pmod{M_{2sk}}$
$\stackrel{B}{\implies}$ $1\equiv2^{2sk}\pmod{M_{sk}F_{sk}}$ $\stackrel{C}{\implies}$
$2^{s}\equiv2^{s}\cdot2^{2sk}\pmod{2^{s}\cdot M_{sk}F_{sk}}\stackrel{D}{=}2^{s(2k+1)}\pmod{n-1}$.
For claim 3, $n-1\equiv F_{s}n-1\pmod{n}$ $\implies$ $n-1\equiv2^{s(2k+1)}\pmod{n}$
because $F_{s}n-1=F_{s}(F_{s(2k+1)}/F_{s})-1\stackrel{\text{A}}{=}2^{s(2k+1)}$. 
\end{proof}
To locate the peaks in the epochs between $2sk$ and $2s(k+1)$, it
is convenient to note that because the moduli $n$, $2n$, and $(2n-1)$
all generate similar residue sequences for the generator $2^{i}$,
each peak is echoed in subsequent epochs. In particular, from $N_{s}[k]$,
the $s^{\text{th}}$ peak propagates up through the epochs via $s$
iterations of $n\to2n$ alternating with $s$ iterations of $n\to2n-1$.
E.g., the $s=2$ peaks occur at $n={\bf 1},{\scriptstyle \times2=}2,{\scriptstyle \times2=}4,{\scriptstyle \times2-1=}7,{\scriptstyle \times2-1=}{\bf 13},26,52,103,{\bf 205},410,820,\cdots$
with the bolded entries being $N_{2}[k]$ for $k=0,1,2,\cdots$. We
call these the Mersenne peaks because the peak at $n=N_{s}(1)$ in
epoch $s$ is anticipated by a bump at Mersenne number $n=M_{s}$
in epoch $s-1$ under the progression $n\to2n-1$, i.e., $N_{s}(1)=2M_{s}-1$.

\section{Peak values\label{sec:Peak-values}}

By unrolling the recursive fixpoint at Mersenne peaks $n=N_{s}[k]$
and solving recurrences it is possible to get a closed-form solution
for selected $e[n]$. Defining $J_{s}[j]\doteq sj2^{sj}/F_{sj}$,
the $k^{\text{th}}$ instance of the $s^{\text{th}}$ peak has location
and value 
\begin{equation}
\begin{array}{rcl}
e\left[n=N_{s}[k]=F_{s(2k+1)}/F_{s}\right] & = & J_{s}[2k+1]-J_{s}[1]+2/F_{s}\\
 & = & \frac{s(2k+1)2^{s(2k+1)}}{F_{s(2k+1)}}-\frac{s2^{s}}{F_{s}}+\frac{2}{F_{s}}\\
 & = & 2sk+\frac{s+2}{F_{s}}-\frac{s(2k+1)}{F_{s(2k+1)}}
\end{array}\label{eq:expectationAtPeaks}
\end{equation}

\section{Bounds on $e[n]$}

We begin by summarizing the results of this section: For $m=\lceil\log_{2}n\rceil$
and $j=m+1+(m\bmod2)$, we have the bounds
\[
\begin{array}{rrclclcl}
\text{discrete:} & m & \le & e[n] & \le & 2+e[F_{j}/3]+m-j & < & m+1\\
\text{continuous:} & \log_{2}n & \le & e[n] & \le & 2+\nicefrac{(n-1)}{n}\log_{2}(n-1) & < & 2+\log_{2}n\,,
\end{array}
\]
with all $\leq$ bounds sharp. Fig.~(\ref{fig:expectationCurve})
depicts all these relationships.

The lower bounds are trivial. We begin with the loose discrete upper
bound. Regarding eqn.~(\ref{eq:expectationAtPeaks}), note that the
first term, $2sk$, is the number of the epoch in which the peak appears.
The second term attains a maximum value of 1 at $s=1,$ and the third
term always reduces the sum. Consequently $\text{2sk+1}$ upper-bound
all peaks in the epoch or, equivalently, 
\begin{prop}
$e[n]<\lceil\log_{2}n\rceil+1$. 
\end{prop}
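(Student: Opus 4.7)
The plan is to bound $e[n]$ directly from the probability representation of \S\ref{sec:EfficientScheme}, avoiding the peak analysis of \S\ref{sec:Peak-locations}--\S\ref{sec:Peak-values} altogether. From eqn.~(\ref{eq:no-decision}) and the telescoping identity used in the proof of Proposition~\ref{prop:Optimality},
$$e[n] = \sum_{t=0}^{\infty} \text{Pr}(d > t) = \sum_{t=0}^{\infty} \frac{2^t \bmod n}{2^t},$$
and I would split this sum at the natural breakpoint $t = m \doteq \lceil \log_2 n \rceil$.

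For $0 \le t < m$ one has $2^t < n$, so each such term equals $1$ and the initial block contributes exactly $m$. For $t = m$, since $m = \lceil \log_2 n \rceil$ forces $n \le 2^m < 2n$, the residue is $2^m \bmod n = 2^m - n$, contributing $(2^m - n)/2^m$ (which is $0$ when $n$ is a power of $2$). For every $t \ge m+1$, the residue lies in $\{0, 1, \ldots, n-1\}$, so the tail is dominated by a geometric series,
$$\sum_{t=m+1}^{\infty} \frac{2^t \bmod n}{2^t} \le (n-1)\sum_{t=m+1}^{\infty} 2^{-t} = \frac{n-1}{2^m}.$$
Summing the three pieces gives
$$e[n] \le m + \frac{2^m - n}{2^m} + \frac{n-1}{2^m} = m + 1 - \frac{1}{2^m} < m+1.$$

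No step poses a serious obstacle; the only mildly delicate point is identifying the $t=m$ term, and it falls out of the definition of $m$. The argument in fact delivers the slightly sharper bound $e[n] \le m + 1 - 2^{-m}$, so the strict inequality is explicit and quantitative rather than merely asymptotic, and it does not depend on the (nontrivial) claim that the Mersenne peaks of \S\ref{sec:Peak-values} dominate $e[n]$ throughout each epoch.
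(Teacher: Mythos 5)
Your argument is correct, and it takes a genuinely different route from the paper. The paper deduces the bound from the closed-form peak values in eqn.~(\ref{eq:expectationAtPeaks}): the first term $2sk$ is the epoch index, the second term is at most $1$ (attained at $s=1$), and the third term is negative, so every peak is below $m+1$; this leans on the peak analysis of the preceding sections and, implicitly, on the fact that the peaks dominate the rest of the epoch. You instead work directly from the survival-probability identity $e[n]=\sum_{t\ge 0}\mathrm{Pr}(d>t)=\sum_{t\ge 0}2^{-t}(2^{t}\bmod n)$, which the paper itself establishes via eqn.~(\ref{eq:no-decision}) and the proof of Proposition~\ref{prop:Optimality}, and split the sum at $t=m=\lceil\log_2 n\rceil$: the first $m$ terms each equal $1$ (since $2^{t}<n$), the $t=m$ term is exactly $(2^{m}-n)/2^{m}$ (since $n\le 2^{m}<2n$), and the tail is at most $(n-1)2^{-m}$ by bounding each residue by $n-1$ and summing the geometric series, giving $e[n]\le m+1-2^{-m}$. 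This is more elementary (no Mersenne/Fermat peak machinery), fully rigorous for every $n$ rather than only at peaks, and even slightly sharper, with an explicit quantitative gap $2^{-m}$ below $m+1$; what the paper's route buys in exchange is a tight link between this loose bound and the peak structure that it then refines into the sharp Fermat and Mersenne bounds.
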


\subsection{Continuous upper bound}

The largest spikes in the $e[n]$ sequence occur at the Fermat numbers
$n=F_{m}$, because these are associated with the largest jumps in
the residue sequence, and these jumps occur earliest in the exponentially
decaying sum for $e[n]$ in eqn.~\ref{eq:expectationFromResidues}.
Consequently we can construct an upper bound for the whole curve from
the Fermat peak values:
\begin{prop}[Fermat bound]
\label{prop:FermatPeakBound} For all $n$ in the $m^{\text{th}}$
epoch, $e[n]\le2+\frac{n-1}{n}\log_{2}(n-1)<2+\log_{2}n$ with equality
at $n=F_{m}$.
\end{prop}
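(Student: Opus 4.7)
The plan is to prove the bound in two phases: verify the equality case at $n=F_m$ by direct evaluation, then establish the general inequality via a residue-envelope argument.

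\textbf{Equality at Fermat numbers.} For $n=F_m=2^m+1$, the congruence $2^m\equiv-1\pmod n$ gives $\mathrm{ord}_n(2)=2m$ and the explicit residue cycle $c_i=1,2,4,\ldots,2^{m-1},2^m,F_m-2,F_m-4,\ldots,F_m-2^{m-1}$. Substituting into eqn.~\ref{eq:expectationFromResidues} with $T=2m$ and collapsing the two resulting geometric-type sums yields $e[F_m]=2+m(F_m-1)/F_m$, which equals $f(F_m)$ since $\log_2(F_m-1)=m$. The strict bound $f(n)<2+\log_2 n$ is then immediate from $(n-1)/n<1$ and $\log_2(n-1)\le\log_2 n$.

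\textbf{Upper bound for general $n$.} Use $e[n]=\sum_{t\ge 0}(2^t\bmod n)/2^t$ from eqn.~\ref{eq:no-decision}, and split at $t=\ell:=\lfloor\log_2(n-1)\rfloor$: the first $\ell+1$ terms each contribute $1$ because $2^t<n$. For the tail, note that $\alpha_t:=(2^t\bmod n)/2^t$ is non-increasing, with a drop of exactly $n/2^{t+1}$ at each index $t$ where $2^t\bmod n\ge n/2$. Letting $D$ be the set of such descent indices, one has $\sum_{t\in D}2^{-(t+1)}=1/n$ (the binary expansion of $1/n$), and double-summing $\alpha_t$ gives the representation $e[n]=n\sum_{t\in D}(t+1)/2^{t+1}$.

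\textbf{The main obstacle} is bounding this weighted bit-sum by $f(n)$. The plan is an extremal/comparison argument: subject to the constraints $\sum_{t\in D}2^{-(t+1)}=1/n$ and $\min D\ge\ell$ (forced by the epoch), the sum $n\sum_{t\in D}(t+1)/2^{t+1}$ is maximized when the bits of $1/n$ form the earliest admissible contiguous run, which is exactly the Fermat pattern $1/F_m=0.\overline{0^m 1^m}_2$; Step~1 then identifies the extremal value with $f(n)$. A position-by-position swap argument, using that the weight $(t+1)/2^{t+1}$ is eventually decreasing while the mass constraint is preserved, shows that relocating any bit of $1/n$ to a later position strictly lowers the sum, so non-Fermat configurations give $e[n]<f(n)$ as required.
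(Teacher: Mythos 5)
Your Step 1 is fine, and so is your key identity: from eqn.~\ref{eq:no-decision} one indeed gets $e[n]=\sum_{t\ge0}(2^{t}\bmod n)/2^{t}=n\sum_{t\in D}(t+1)2^{-(t+1)}$, i.e.\ $e[n]=n\sum_{j}j\,b_{j}2^{-j}$ where $1/n=\sum_{j}b_{j}2^{-j}$; and your direct evaluation of the residue cycle at $n=F_{m}$ gives the same equality value that the paper obtains by unrolling the recurrence of algorithm~\ref{alg:recursiveSolution}. The genuine gap is in Step 2, the extremal argument. First, for a fixed $n$ there is nothing to optimize: $D$ is forced, being (up to an index shift) exactly the positions of the $1$-bits in the unique binary expansion of $1/n$; and the ``Fermat pattern'' $1/F_{m}$ has mass $1/F_{m}\ne1/n$, so it is not even feasible for your constraint set unless $n=F_{m}$. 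Second, if you relax to arbitrary bit patterns of total mass $1/n$ with first bit no earlier than position $m$, the monotonicity you invoke is backwards. The weight per unit of mass at position $t$ is $t+1$, which \emph{increases} with $t$; a single bit cannot be ``relocated later'' while preserving mass, it must be split into several later bits (e.g.\ $2^{-p}=\sum_{i>p}2^{-i}$ turns a contribution $n\,p\,2^{-p}$ into $n(p+2)2^{-p}$), and every such mass-preserving move strictly \emph{raises} the sum. Hence the earliest admissible contiguous run is the minimizer, not the maximizer, and the supremum of the relaxed problem is approached by packing the mass as late as possible, with value tending to $2+\log_{2}n$.

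Consequently, even a corrected version of this extremal scheme can only recover the loose inequality $e[n]<2+\log_{2}n$; it cannot yield the sharp $n$-dependent bound $2+\frac{n-1}{n}\log_{2}(n-1)$, because the relaxation retains nothing about $1/n$ beyond its total mass and leading bit position. Some additional ingredient is needed for general $n$ in the epoch. The paper's own route is cruder but sufficient for its purposes: it bounds every non-Fermat peak in the epoch by the coarse estimate $e<m+1$ coming from eqn.~\ref{eq:expectationAtPeaks}, notes that the right-hand side $2+\frac{n-1}{n}\log_{2}(n-1)$ is increasing in $n$ within the epoch, and verifies $m+1$ is below it already at the first non-Fermat peak $n=F_{2k+1}/3$. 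You would need either that comparison or a replacement argument that uses the actual arithmetic structure of the expansion of $1/n$ (not just its mass) to close Step 2.
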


\begin{proof}
To obtain $e[n=F_{m}]$, we unroll the recurrence in algorithm~(\ref{alg:recursiveSolution})
and observe that for Fermat numbers, the sequence $z[m]=2ng/(f-h)$
has recurrence $z[m]=2z[m-1]+M_{m-1}$, $z[1]=4$, whose solution
$z[m]=m2^{m-1}+2^{m}+1$ implies that $g/(f+h)=z[m]/(2n)=2+m2^{m}/F_{m}=2+J_{1}[m]$
at $n=F_{m}$. Substituting in $m=\log_{2}(n-1)$ gives $e[n=F_{m}]=2+\nicefrac{1}{n}(n-1)\log_{2}(n-1)$.
To establish the upper bound, note that any other peak located at
$N_{s}[k]$ in the same epoch (satisfying $m=2sk$) has the loose
upper bound $e[N_{s}[k]]<2sk+1=m+1=\lfloor\log_{2}(n-1)\rfloor+2$
for any $n>F_{m}$ in the $m^{\text{th}}$ epoch. Thus we need to
show that $\lfloor\log_{2}(n-1)\rfloor+2<2+\frac{n-1}{n}\log_{2}(n-1)$
at any non-Fermat peak. In any epoch, the \noun{lhs} is constant and
\noun{rhs} is increasing, so it suffices to show that it holds at
the first non-Fermat peak, which occurs at $n=N_{1}[k]=F_{2k+1}/3,k>1$.
There we have $\frac{n-1}{n}=1-3/F_{2k+1}$; choose worst case $k=2$.
Then \noun{lhs} is 5 and \noun{rhs} is $2+(1-3/F_{5})\log_{2}(F_{5}/3-1)=2+(10/11)\log_{2}10>5$.
\end{proof}

\subsection{Discrete upper bound}

Since early residues have exponentially larger weights in the expectation,
peaks from the $s=1$ Mersenne peaks dominate all others. This  can
be expressed as a \textquotedbl stair-step\textquotedbl{} curve that
gives a constant (but sharp) upper bound for each epoch:
\begin{prop}[Mersenne bound]
Let $j=2k+1$ for $k\in\mathbb{N}$. Then $e[n]\le e[F_{j}/3]=j2^{j}/F_{j}$
for all $n$ in epoch $m=j-2$ and $e[n]\le1+e[F_{j}/3]$ for all
$n$ in epoch $m=j-1$. 
\begin{proof}
The odd $m$ case is simply the value $e[n]$ at peak $n=N_{1}(k)$
from eqn.~\ref{eq:expectationAtPeaks}, line 2. The even case follows
from $e[2n]=e[n]+1$ and the peak propagation rule. To show that this
also upper-bounds the Fermat peaks, we need $e[n]=2+\frac{n-1}{n}\log_{2}(n-1)<j2^{j}/F_{j}$
at Fermat peak locations $n=F_{j-2}$. Putting all in terms of $k=(j-1)/2$
and taking \noun{rhs-lhs} yields $\frac{4^{k}(6k-5)-4}{2F_{2k-1}F_{2k+1}}$,
which is zero at $k=1$ and positive for $k>1$.
\end{proof}
\end{prop}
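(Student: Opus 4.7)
My plan is to anchor the argument at the sharper bound $e[n] \le e[F_j/3]$ in the even epoch $m = j-1$ --- the epoch that actually contains the Mersenne peak $F_j/3$ --- and then to transfer it to the odd epoch $m = j-2$ via the doubling identity $e[2n] = 1 + e[n]$. The closed form $e[F_j/3] = j 2^j / F_j$ itself falls out of eqn.~\ref{eq:expectationAtPeaks} line~2 specialized to $s = 1$ and $k = (j-1)/2$: the corrections $-J_1[1] = -2/3$ and $2/F_1 = 2/3$ cancel, leaving $J_1[j] = j 2^j / F_j$.

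For the even-epoch bound I would argue that $n = F_j/3$ is the epoch-wide maximizer of $e[n]$. The dominance heuristic quoted at the section head --- the first nontrivial residue carries exponentially the largest weight in eqn.~\ref{eq:expectationFromResidues} --- makes this plausible; quantitatively, eqn.~\ref{eq:expectationAtPeaks} shows that every $s = 1$ Mersenne peak value exceeds every other $N_s[k]$ peak value in the same epoch because the correction $(s + 2)/F_s$ is monotone decreasing in $s$. The only remaining peak in the epoch that needs a direct check is the Fermat peak at $n = F_{j-2}$, whose value $2 + (j-2) 2^{j-2}/F_{j-2}$ (from prop.~\ref{prop:FermatPeakBound}) must be shown not to exceed $j 2^j / F_j$; substituting $j = 2k+1$ and using the identity $F_{2k+1} = 4 F_{2k-1} - 3$ collapses the difference to $[4^k(6k-5) - 4] / [2 F_{2k-1} F_{2k+1}]$, which vanishes at $k = 1$ and is strictly positive for $k \ge 2$.

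Transferring to the odd epoch is then immediate: for any $n$ in epoch $m = j-2$, $2n$ lies in epoch $m = j-1$, so the doubling identity together with the previous step gives $e[n] = e[2n] - 1 \le (1 + e[F_j/3]) - 1 = e[F_j/3]$, which is actually the tighter of the two claimed bounds.

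The principal obstacle is the Fermat-peak inequality; the telescoping $F_{2k+1} = 4 F_{2k-1} - 3$ keeps the algebra manageable, but the vanishing at $k = 1$ is precisely what forces the non-strict $\le$ in the statement rather than a strict inequality. A secondary soft spot is the informal ``$s = 1$ dominance'' claim; a self-contained workaround is to bound the tail of the residue sum past index $m + 1$ in eqn.~\ref{eq:expectationFromResidues} by a geometric series of ratio $1/2$ using the trivial bound $c_i \le n - 1$.
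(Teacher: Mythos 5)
Your proposal is correct and is essentially the paper's own argument: the same specialization of eqn.~\ref{eq:expectationAtPeaks} giving $e[F_{j}/3]=j2^{j}/F_{j}$, the same transfer between adjacent epochs via $e[2n]=1+e[n]$, and the identical Fermat-peak comparison collapsing to $\frac{4^{k}(6k-5)-4}{2F_{2k-1}F_{2k+1}}$ --- you merely anchor the sharp bound in the epoch containing $F_{j}/3$ and push it down, where the paper states the peak value for the lower epoch and pushes up (a mirror image reflecting the paper's own off-by-one ambiguity about which epoch houses $F_{j}/3$; your reading in fact yields the sharper pair of bounds). One caution: your fallback for the ``$s=1$ dominance'' soft spot --- bounding the tail of eqn.~\ref{eq:expectationFromResidues} beyond index $m+1$ by a geometric series with $c_{i}\le n-1$ --- is too crude to recover the sharp constant $j2^{j}/F_{j}$ (it only gives roughly $\lceil\log_{2}n\rceil+1$), though the paper's proof leaves exactly the same point at the informal level, resting on its catalog of Mersenne and Fermat peaks.
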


\section{Orderings\label{sec:Orderings}}

We close with result on fair orderings: It is more toss-efficient
to determine an ordering of $k$ objects by choosing 1 of $k!$, than
by sequentially choosing 1 of $k$, then 1 of $k-1,$ etc. Specifically,
\begin{prop}[Logarithmic subadditivity]
$\forall_{a,b}\:e[ab]\le e[a]+e[b]$ . 
\end{prop}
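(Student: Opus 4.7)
The plan is to exhibit an explicit fair procedure for choosing 1 of $ab$ whose expected cost is exactly $e[a]+e[b]$, and then appeal to optimality (Proposition~\ref{prop:Optimality}) to conclude that the bit-efficient scheme does at least as well.

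First I would describe the composite procedure: run the bit-efficient scheme to choose some $i\in\{0,1,\ldots,a-1\}$ uniformly, then, using a fresh stream of coin tosses, run the bit-efficient scheme to choose some $j\in\{0,1,\ldots,b-1\}$ uniformly, and finally return the integer $ai+j\in\{0,1,\ldots,ab-1\}$. The encoding $(i,j)\mapsto ai+j$ is a bijection onto $\{0,\ldots,ab-1\}$, and because the two sub-procedures use disjoint, independent coin tosses the pair $(i,j)$ is uniformly distributed on the product $\{0,\ldots,a-1\}\times\{0,\ldots,b-1\}$. Thus the output is uniform on $\{0,\ldots,ab-1\}$, so the composite is a fair 1-of-$ab$ procedure.

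Next I would compute its expected number of tosses. Let $t_{a}$ and $t_{b}$ denote the (random, possibly unbounded) toss counts of the two sub-procedures. Since these counts are supported on independent coin streams, linearity of expectation gives $\mathbb{E}[t_{a}+t_{b}]=\mathbb{E}[t_{a}]+\mathbb{E}[t_{b}]=e[a]+e[b]$, regardless of the fact that the stopping times are themselves random. Hence a fair 1-of-$ab$ scheme exists with expected cost $e[a]+e[b]$.

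Finally, by Proposition~\ref{prop:Optimality}, $e[ab]$ is the infimum of expected toss counts over all fair 1-of-$ab$ procedures, so $e[ab]\le e[a]+e[b]$. There is really no hard step here; the only thing to be mindful of is that the two sub-procedures must draw from independent coin streams so that linearity of expectation applies without any conditioning subtlety. I would also note in passing that the same argument, iterated, yields $e[\prod_i n_i]\le\sum_i e[n_i]$, which is exactly the ``choose 1 of $k!$'' vs.\ ``sequentially choose 1 of $k$, then 1 of $k-1$, $\ldots$'' comparison that motivates the section.
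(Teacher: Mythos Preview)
Your argument is correct and is essentially the paper's own: exhibit the sequential ``choose 1 of $a$, then 1 of $b$'' procedure as a fair 1-of-$ab$ scheme with expected cost $e[a]+e[b]$, then invoke optimality (Proposition~\ref{prop:Optimality}) to get $e[ab]\le e[a]+e[b]$. The paper's proof additionally pins down when equality holds ($a$ or $b$ a power of $2$) and exhibits explicit pairs with strict inequality, but these go beyond the stated proposition; one small quibble is that independence of the two coin streams is needed for \emph{fairness} of the composite, not for linearity of expectation, which holds regardless.
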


\begin{proof}
We prove $\forall_{a,b}\ngtr$, $\exists_{a,b}=$, $\exists_{a,b}<$.
Claim 1: By prop.~, $e[n]$ is toss-efficient. No toss-efficient
procedure can have $\exists_{a,b}\:e[ab]>e[a]+e[b]$ because one could
then reduce $e[ab]$ by simply choosing 1 of $a$ and then 1 of $b$.
Claim 2: Since $e[2n]=1+e[n]$, we have equality $e[ab]=e[a]+e[b]$
when $a$ or $b$ is a power of 2. Claim 3: Consider a Fermat peak
at $a=F_{s}$ and a Mersenne peak at $b=F_{s(2k+1)}/F_{s}$ (see prop.~\ref{prop:MersennePeaks}).
Using the peak-value formulas from prop.~\ref{prop:FermatPeakBound}
and eqn.~\ref{eq:expectationAtPeaks} we obtain $e[a]+e[b]-e[ab]=e[F_{s}]+e[F_{s(2k+1)}/F_{s}]-e[F_{s(2k+1)}]=2/F_{s}$.
\end{proof}
We further conjecture that the inequality is strict whenever $a$
and $b$ both have odd factors.

\bibliographystyle{amsplain}
\bibliography{cointoss}

\providecommand{\bysame}{\leavevmode\hbox to3em{\hrulefill}\thinspace}
\providecommand{\MR}{\relax\ifhmode\unskip\space\fi MR }
\providecommand{\MRhref}[2]{%
  \href{http://www.ams.org/mathscinet-getitem?mr=#1}{#2}
}
\providecommand{\href}[2]{#2}
\begin{thebibliography}{1}

\bibitem{BernardLetac73}
Jacques Bernard and G{\'e}rard Letac, \emph{Construction d'{\'e}venements
  {\'e}quiprabables et coefficients multinomiaux modulo $p$}, Illinois Journal
  of Math \textbf{17} (1973), 317--332.

\bibitem{Bissinger90}
H.~G. Bissinger, \emph{Friday night lights: A town, a team, and a dream},
  Addison-Wesley, 1990.

\bibitem{Dwass72}
Meyer Dwass, \emph{Unbiased coin tossing with discrete random variables},
  Annals of Mathematical Statistics \textbf{43} (1972), no.~3, 860--864.

\bibitem{Kozen14}
Dexter Kozen, \emph{Optimal coin flipping}, Panangaden Festschrift
  (Switzerland) (F.~van Breugel~et al., ed.), LNCS, vol. 8464, Springer, 2014,
  pp.~407--426.

\bibitem{StoutWarren84}
Quentin~F. Stout and Bette Warren, \emph{Tree algorithms for coin tossing with
  a biased coin}, Annals of Probability \textbf{12} (1984), no.~1, 212--220.

\bibitem{vonNeumann51}
John von Neumann, \emph{Various techniques used in connection with random
  digits}, Tech. Report~12, U.S. National Bureau of Standards, 1951, To unbias
  a coin, wait for 2 different outcomes, keep 1st.

\end{thebibliography}

\appendix

\section{Lucky numbers and how to find them\label{sec:LuckyNumbers}}

The pattern of lucky numbers in a finite lottery will depend on the
mapping from random flips to ticket numbers. The common programming
idiom 
\[
\text{randomInteger}(n):=\text{randomInteger}(2^{b})\bmod n
\]
 assigns more random outcomes to lower ticket numbers, whereas 
\[
\text{randomInteger}(n):=\lfloor n\cdot{\tt randomFloat}[0,1)\rfloor\:,
\]
 widely enshrined in the libraries of scientific computing environments,
has patterned lucky numbers, exemplifed by
\begin{prop}
Given $b$, choose any $n=2^{a}M_{k}$ with $a\ge0$, $k>1$, $a+k\le b$,
and let $p\sim\mathcal{U}_{b}(1)$ be a random variable uniformly
sampled from the set $U_{b}\doteq\{0,\nicefrac{1}{2^{b}},\nicefrac{2}{2^{b}},\nicefrac{3}{2^{b}},\cdots,\nicefrac{2^{b}-2}{2^{b}},\nicefrac{2^{b}-1}{2^{b}}\}$.
Then random integers $q\doteq\lfloor np\rfloor\in\{0,\cdots,n-1\}$
and $q\bmod M_{k}$ are both nonuniformly distributed with a length-$M_{k}$
pattern of lucky and unlucky numbers. Furthermore, if $k\mid$$b-a$,
then given two numbers $0\le q_{L},q_{U}<n$ with $M_{k}-1\equiv q_{L}\pmod{M_{k}}$
and $M_{k}-1\not\equiv q_{U}\pmod{M_{k}}$, $q_{L}$ is lucky with
$\frac{Pr(q=q_{L})}{Pr(q=q_{U})}=$$\frac{c+1}{c}$ with $c=\lfloor\frac{2^{b-a}}{M_{k}}\rfloor\in\mathbb{N}$
as small as $c=1$ for $n=3\cdot2^{b-2}$.
\end{prop}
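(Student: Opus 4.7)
My plan is to reduce the problem to a direct counting argument modulo $M_{k}$ and then exploit the fact that $2$ has multiplicative order exactly $k$ modulo $M_{k}$.

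First, I would parametrize $p=j/2^{b}$ for $j\in\{0,1,\ldots,2^{b}-1\}$ and set $B\doteq 2^{b-a}$, so that $q=\lfloor np\rfloor=\lfloor M_{k}j/B\rfloor$. Decomposing $j=iB+r$ with $0\le r<B$ and $0\le i<2^{a}$, and observing that $M_{k}r/B<M_{k}$, one obtains $q=iM_{k}+\lfloor M_{k}r/B\rfloor$. Thus $q\bmod M_{k}=\lfloor M_{k}r/B\rfloor$ depends only on $r$ and is identical across the $2^{a}$ blocks of $j$, which immediately establishes the length-$M_{k}$ period in the distributions of both $q$ and $q\bmod M_{k}$, and reduces the problem to studying the uniform sub-range $r\in\{0,\ldots,B-1\}$.

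For each residue $s\in\{0,\ldots,M_{k}-1\}$, the multiplicity of $s$ equals the number of $r$ in $[sB/M_{k},\,(s+1)B/M_{k})$, namely $\lceil(s+1)B/M_{k}\rceil-\lceil sB/M_{k}\rceil$. Since $M_{k}$ is odd and $B$ is a power of $2$, $\gcd(M_{k},B)=1$ and $B/M_{k}\notin\mathbb{Z}$, so these counts cannot all be equal: they must take exactly the two adjacent integer values $\lfloor B/M_{k}\rfloor$ and $\lceil B/M_{k}\rceil$. This proves the unconditional nonuniformity claim and gives the promised length-$M_{k}$ pattern of lucky and unlucky numbers.

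To pin down the exact ratio under the hypothesis $k\mid b-a$, I would appeal to the fact that the order of $2$ modulo $M_{k}=2^{k}-1$ is exactly $k$ (since $0<2^{j}-1<M_{k}$ for $1\le j\le k-1$), so $B\equiv1\pmod{M_{k}}$, i.e.\ $B=cM_{k}+1$ with $c=\lfloor B/M_{k}\rfloor$. Substituting $sB/M_{k}=sc+s/M_{k}$ and evaluating $\lceil sB/M_{k}\rceil$ at $s=0,1,\ldots,M_{k}$ (with the endpoint $s=M_{k}$ giving $B$ exactly), the differences telescope: exactly one residue class receives $c+1$ preimages and the remaining $M_{k}-1$ classes receive $c$, producing the advertised ratio $(c+1)/c$. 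The extremal instance $n=3\cdot 2^{b-2}$ gives $B=4$, $M_{k}=3$, and $c=1$ as a cross-check.

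The main subtlety will be the bookkeeping in the last step: locating \emph{which} residue $s$ picks up the extra preimage requires evaluating the ceilings at the endpoints $s=0$ and $s=M_{k}$ carefully, since the discontinuity of $\lceil\cdot\rceil$ at integer arguments shifts where the telescoping leaves its slack. I would cross-check on a small instance (say $b=4$, $a=0$, $k=2$, which yields counts $6,5,5$) to confirm the labeling before finalizing. Beyond that the proof is a pure residue computation with no hidden difficulties.
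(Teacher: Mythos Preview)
Your approach is essentially the paper's own, recast algebraically: your block decomposition $j=iB+r$ is exactly the paper's observation that the scaled sample set ``wraps $2^{a}$ times around a cylinder of circumference $M_{k}$,'' and your ceiling-difference count is the pigeonhole step the paper applies within one wrap. For the $k\mid b-a$ case the paper invokes $(2^{k}-1)\mid(2^{b-a}-1)$ to get $2^{b-a}\equiv 1\pmod{M_{k}}$, which is precisely your order-of-$2$ argument giving $B=cM_{k}+1$.

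One caution your plan already anticipates: your cross-check $(6,5,5)$ for $b=4,\,a=0,\,k=2$ puts the surplus on residue $0$, not on $M_{k}-1$ as the statement asserts. The paper's proof simply declares the lucky position to be ``last for $\lfloor np\rfloor$'' without computation, and your concrete example disagrees with that labeling. So the counting argument and the ratio $(c+1)/c$ go through exactly as you outline, but when you carry out the endpoint bookkeeping you will find the lucky class is $q\equiv 0\pmod{M_{k}}$ rather than $q\equiv M_{k}-1$; be prepared to note this discrepancy rather than force your telescoping to match the stated label.
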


\begin{proof}
$\lfloor np\rfloor\pmod{M_{k}}$ wraps $U_{b}$, viewed as a knotted
string, around a cylinder of circumference $M_{k}$ with $M_{k}$
equidistant notches, then assigns each knot to the closest notch in
one direction. Since $2^{a}\mid2^{b}$, the string wraps exactly $2^{a}$
times such that each consecutive group of $2^{b-a}$ samples from
$U_{b}$ is identically assigned to elements of the residue set $\{0,\cdots,M_{k}-1\}$.
Since $2^{b-a}>M_{k}$ and $M_{k}\nmid2^{b-a}$, by the pigeonhole
principle, $2^{b-a}\bmod M_{k}$ elements of the residue set get assigned
one extra sample. If $k\mid b-a$, exactly $1\equiv2^{b-a}\pmod{M_{k}}$
element gets this surplus, because $(x-y)\mid(x^{z}-y^{z})$ and thus
$k\mid b-a$$\implies$($2^{k}-1)\mid(2^{k(b-a)/k}-1)$$\implies$$M_{k}\mid M_{b-a}$$\implies$$1\equiv2^{b-a}\pmod{M_{k}}$.
The lucky element's position in the residue set is: last for $\lfloor np\rfloor$;
first for $\lceil np\rceil;$ middle for $\lfloor np\rceil$. The
remainder of the proposition follows from arithmetic.
\end{proof}
In practice, the pattern and prominence of lucky numbers will also
depend on the number of bits used internally for CPU arithmetic and
the choice of rounding scheme. At time of writing the bias can be
demonstrated in several numerically sophisticated programming environments,
even when $M_{k}$ is replaced by an arbitrary odd number. For example,
with $b=53$ bits in the IEEE754 double-precision floating point significand,
let $j$ be a small integer, $f=F_{j}$, and $n=\lfloor2^{b-1}(F_{j}/F_{j-1})\rceil$.
Then in $k$ trials, lottery ticket numbers $\pmod{f}$ are winners
in Matlab or Octave with non-uniform relative frequencies
\[
\mathtt{full(sparse(1+mod(randi(n,k,1),f),1,1,f,1))/k}
\]

Similar bias can be elicited from the ${\tt sample}()$ and ${\tt runif}$()
functions in the R statistics environment. Python and Mathematica
use arbitrary-precision arithmetic which drives the bias down to insignificant
levels. 

Rejection sampling offers a trivial but inefficient way to restore
fairness: Obtain a fair $R\in\{1,\ldots,2^{b}\}$ from a round of
$b$ tosses, reject and repeat if $R>n$. Naïvely this takes a impractical
$e'=1+(1-n/2^{b})e'=2^{b}/n$ rounds of $b$ tosses each. But if we
instead reject when $R>n\lfloor2^{b}/n\rfloor$, the largest multiple
of $n$ that is $\le2^{b}$, then $R\bmod n$ is a fair draw from
$\{0,\ldots,n-1\}$ and the expectation falls to $e'=1/(1-(2^{b}\bmod n)/2^{b})$
rounds. In tosses, $b\le e<2b$, $e[3]=b$ and $e[F_{b-1}]=2b(1-1/F_{b})$.
Although quite bit-inefficient, C++ libraries appear to use an (algebraically)
equivalent strategy.

\section{\label{sec:algorithms}Solution for $e[n]$ recurrence in (\ref{eq:roundRecursion})}

\begin{algorithm}[H]
\caption{\label{alg:recursiveSolution}An integer-arithmetic solution of the
recursive fixpoint for $e[n]$. $m$ is the number of coin flips in
each round; $o$ is the number of possible outcomes; $r$ is the number
of unassigned outcomes; and $r/o$ is the probability of going another
round.}

\textbf{Input:} Odd $n\ge3$.
\begin{description}
\item [{Initialize}] \noindent \begin{raggedright}
$f\leftarrow1,\:g\leftarrow0,\:h\leftarrow1,\:r\leftarrow1.$
\par\end{raggedright}
\item [{Iterate}] \noindent \begin{raggedright}
$m,o\leftarrow\min_{m}\text{s.t. }o=2^{m}r>n$,\\
\qquad{}\qquad{}~ $r\leftarrow o-n$,~ $f\leftarrow fo$,~ $g\leftarrow(g+mh)o$,~
$h\leftarrow hr$.
\par\end{raggedright}
\item [{Until}] \noindent \begin{raggedright}
$r=1$.
\par\end{raggedright}
\end{description}
\textbf{Output:} $e[n]=g/(f-h)$.
\end{algorithm}

\section*{}
\end{document}